\newtheorem{Theorem}{Theorem}[section]
\newtheorem{Proposition}[Theorem]{Proposition}
\newtheorem{Lemma}[Theorem]{Lemma}
\newtheorem{Corollary}[Theorem]{Corollary}
\theoremstyle{definition}
\newtheorem{Definition}[Theorem]{Definition}
\newtheorem{Remark}[Theorem]{Remark}
\newcommand{\bTheorem}[1]{
\begin{Theorem} \label{T#1} }
\newcommand{\eT}{\end{Theorem}}
\newcommand{\bProposition}[1]{
\begin{Proposition} \label{P#1}}
\newcommand{\eP}{\end{Proposition}}
\newcommand{\bLemma}[1]{
\begin{Lemma} \label{L#1} }
\newcommand{\eL}{\end{Lemma}}
\newcommand{\bCorollary}[1]{
\begin{Corollary} \label{C#1} }
\newcommand{\eC}{\end{Corollary}}
\newcommand{\bRemark}[1]{
\begin{Remark} \label{R#1} }
\newcommand{\eR}{\end{Remark}}
\newcommand{\E}{\mathcal{E}}
\newcommand{\bDefinition}[1]{
\begin{Definition} \label{D#1} }
\newcommand{\eD}{\end{Definition}}
\newcommand{\Q}{\mathbb{T}^N}
\newcommand{\bfphi}{\boldsymbol{\varphi}}
\newcommand{\bFormula}[1]{
\begin{equation} \label{#1}}
\newcommand{\eF}{\end{equation}}
\newcommand{\Ov}[1]{\overline{#1}}
\newcommand{\vr}{\varrho}
\newcommand{\tvr}{\tilde \vr}
\newcommand{\vm}{\vc{m}}
\newcommand{\vc}[1]{{\bf #1}}
\newcommand{\Div}{{\rm div}_x}
\newcommand{\Grad}{\nabla_x}
\newcommand{\dx}{\,{\rm d} {x}}
\newcommand{\dt}{\,{\rm d} t }
\newcommand{\vU}{\vc{U}}
\newcommand{\intO}[1]{\int_{\Q} #1 \ \dx}
\newcommand{\D}{{\rm d}}
\newcommand{\ep}{\varepsilon}
\definecolor{Cgrey}{rgb}{0.85,0.85,0.85}
\definecolor{Cblue}{rgb}{0.50,0.85,0.85}
\definecolor{Cred}{rgb}{1,0,0}
\definecolor{fancy}{rgb}{0.10,0.85,0.10}
\newcommand\Cbox[2]{%
    \newbox\contentbox%
    \newbox\bkgdbox%
    \setbox\contentbox\hbox to \hsize{%
        \vtop{
            \kern\columnsep
            \hbox to \hsize{%
                \kern\columnsep%
                \advance\hsize by -2\columnsep%
                \setlength{\textwidth}{\hsize}%
                \vbox{
                    \parskip=\baselineskip
                    \parindent=0bp
                    #2
                }%
                \kern\columnsep%
            }%
            \kern\columnsep%
        }%
    }%
    \setbox\bkgdbox\vbox{
        \color{#1}
        \hrule width  \wd\contentbox %
               height \ht\contentbox %
               depth  \dp\contentbox
        \color{black}
    }%
    \wd\bkgdbox=0bp%
    \vbox{\hbox to \hsize{\box\bkgdbox\box\contentbox}}%
    \vskip\baselineskip%
}
\date{}
\begin{document}

%%%%%%%%%%%%%%%%%%%%%%%%%%%%%%%%

\title{On uniqueness of dissipative solutions to the isentropic Euler system}

\author{Eduard Feireisl
\thanks{The research of E.F.~leading to these results has received funding from the
Czech Sciences Foundation (GA\v CR), Grant Agreement
18--05974S. The Institute of Mathematics of the Academy of Sciences of
the Czech Republic is supported by RVO:67985840.} \and Shyam Sundar Ghoshal
\thanks{S.G. thanks Inspire faculty-research grant 
 DST/INSPIRE/04/2016/000237} 
\and Animesh Jana
}

\date{\today}

\maketitle

\bigskip

\centerline{Institute of Mathematics of the Academy of Sciences of the Czech Republic}

\centerline{\v Zitn\' a 25, CZ-115 67 Praha 1, Czech Republic}

\centerline{and}

\centerline{Institute of Mathematics, TU Berlin}

\centerline{Strasse des 17.Juni, Berlin, Germany}

\bigskip

\centerline{Centre for Applicable Mathematics, Tata Institute of Fundamental Research}

\centerline{Post Bag No 6503, Sharadanagar, Bangalore - 560065, India}

\begin{abstract}

The dissipative solutions can be seen as a convenient generalization of the concept of weak solution to the isentropic Euler system. 
They can be seen as expectations of the Young measures associated to a suitable measure--valued solution of the problem. We show 
that dissipative solutions coincide with weak solutions starting from the same initial data on condition that:
{\bf (i)} the weak solution enjoys certain Besov regularity; {\bf (ii)} the symmetric velocity gradient of the weak solution satisfies 
a one--sided Lipschitz bound.

\end{abstract}

{\bf Keywords:} Euler system, compressible fluid, weak solution, dissipative solution, uniqueness.

%\tableofcontents

\section{Introduction}
\label{I}

An iconic example of a system of partial differential equations arising in continuum fluid mechanics is the (barotropic) 
\emph{Euler system}:
\begin{equation} \label{I1}
\partial_t \vr + \Div \vm = 0,\ \partial_t \vm + \Div \left( \frac{\vm \otimes \vm}{\vr} \right) + \Grad p(\vr) = 0. 
\end{equation}
The unknown functions $\vr = \vr(t,x)$ and $\vm = \vm(t,x)$ are the density and the momentum respectively of a compressible viscous fluid, $p = p(\vr)$ is the barotropic pressure. The fluid occupies a domain $\Omega \subset R^N$, $N=1,2,3$ and its initial state at 
a reference time $t = 0$ is prescribed:
\begin{equation} \label{I2} 
\vr(0, \cdot) = \vr_0,\ \vm(0, \cdot) = \vm_0. 
\end{equation}
To avoid problems connected with the presence of kinematic boundary, we impose the space--periodic boundary conditions, meaning $\Omega$ can be identified with a flat torus:
\begin{equation} \label{I3}
\Omega = \Q \equiv \left( [-1,1]|_{\{ -1, 1 \}} \right)^N,\ N=1,2,3.
\end{equation}
Note that this is not a major restrictions as solutions are expected to obey the finite speed of propagation, in particular, the 
initial data with $\vm_0$ compactly supported give rise to solutions enjoying the same property.
Finally, we suppose the isentropic pressure density state equation
\[
p(\vr) = a \vr^{\gamma}, \ a > 0,\ \gamma > 1.
\]

Well--posedness of the problem \eqref{I1}--\eqref{I3} is a delicate issue. Summarizing the most recent state--of--art 
we can assert:
\begin{itemize}
\item
Euler system \eqref{I1}--\eqref{I3} is well--posed (there is a unique solution) 
locally in time for sufficiently regular initial data, see e.g. the monographs Benzoni-Gavage and Serre \cite{BenSer} or 
Majda \cite{Majd};
\item regular solutions blow up (develop a shock wave type singularity) in a finite time for fairly general class of initial data, see e.g. Smoller \cite{SMO};    
\item weak solutions exist for any initial data globally in time, see DiPerna \cite{DiP3}, Lions, Perthame, and Souganidis 
\cite{LPS} for $N=1$, and Chiodaroli \cite{Chiod} and \cite{Fei2016} for $N=2,3$; the problem is ill--posed in the multidimenional case -- there exist infinitely many global--in--time weak solutions for any (regular) initial data. 
\end{itemize}

The existence of infinitely many weak solutions in the multidimensional case has been revealed through the method of convex integration adapted to problems in fluid mechanics by DeLellis and Sz\' ekelyhidi \cite{DelSze3}. Many of these solutions apparently violate 
the energy balance (inequality) 
\begin{equation} \label{I4}
\partial_t e + \Div \left[ \Big( e + p \Big) \frac{\vm}{\vr} \right]  \leq 0,\ \mbox{with}\ e \equiv \frac{1}{2} \frac{|\vm|^2}{\vr} 
+ P(\vr), \ P(\vr) = \frac{a}{\gamma - 1} \vr^\gamma,
\end{equation}
or even its simple integrated form
\begin{equation} \label{I5} 
\frac{{\rm d}}{{\rm d}t} E(t)  \leq 0,\ E(t) = \intO{ e(t, \cdot) },\  E(0) =  \intO{ \left[ \frac{1}{2} \frac{|\vm_0|^2}{\vr_0} 
+ P(\vr_0) \right] }.
\end{equation}
The weak solution satisfying either of these conditions are termed \emph{admissible}. 

Unfortunately, as shown by Chiodaroli, DeLellis, Kreml \cite{ChiDelKre}, Chiodaroli, Kreml \cite{ChiKre}, or, recently, by 
Chiodaroli et al. \cite{ChKrMaSwI}, the Euler system remains essentially ill--posed even in the class of admissible weak solutions 
starting from regular initial data and satisfying the more restrictive form of the energy balance \eqref{I4}. 

Despite all the ill--posedness results, the admissible weak solutions enjoy the \emph{weak--strong uniqueness} principle: 
a weak solution coincides with the strong solution starting from the same initial data on the life--span of the latter, see
Dafermos \cite{Daf4}. This property is quite robust and has been extended to the class of more general measure--valued solutions
to the Euler system by Gwiazda, \'Swierczewska--Gwiazda, Wiedemann \cite{GSWW}. The key point is validity of the so--called 
relative energy inequality, see e.g. \cite[Proposition 3.1]{FeKr2014}:
\begin{equation} \label{I6}
\begin{split}
\int_{\Q} &\E \left( \vr, \vm \Big| r, \vc{U} \right) (\tau, \cdot) \dx \leq 
\intO{ \E \left( \vr_0, \vm_0 \Big| r_0, \vc{U}_0 \right) } \\
&+ \int_0^\tau \intO{\left[ (\vr \vU - \vm) \partial_t \vU + 
\vm \cdot \Grad \vU \cdot \Big( \vU - \frac{\vm}{\vr} \Big) +
 \Big( (p(r) - p(\vr) \Big) \Div \vU  \right] }\dt
\\
&+\int_0^\tau \intO{ \left[ (r - \vr) \partial_t P'(r) + (r \vc{U} - \vm ) \cdot \Grad P'(r) \right] }\dt, 
\end{split}
\end{equation} 
where 
\begin{equation} \label{I7}
\E \left( \vr, \vm \ \Big| \ r, \vU \right) 
\equiv \frac{1}{2} \vr \left| \frac{\vm}{\vr} - \vU \right|^2 + P(\vr) - P'(r) (\vr - r) - P(r),
\end{equation}
and $r > 0$, $\vU$ are arbitrary Lipschitz continuous ``test functions'', $r_0 \equiv r(0, \cdot)$, 
$\vU_0 \equiv \vU(0, \cdot)$.

In the light of the above mentioned results, it seems of interest to identify the largest class possible of admissible weak solutions
uniquely determined by their initial data. The iconic example for $N=1$ is the Riemann problem, where \eqref{I5} is sufficient to 
select the unique physically admissible weak solution. This is no longer true for $N=2,3$, where the certain Riemann data are known to give rise to infinitely admissible weak solutions, see e.g. \cite{ChiDelKre}. Uniqueness, however, is preserved in the multidimensional case for the rarefaction waves - weak solutions starting from certain Riemann data and becoming Lipschitz at any $t > 0$,  
see Chen and Chen \cite{CheChe}. It is the main goal of the present paper to show that uniqueness holds as soon as the weak solution $\vr = r$, $\vm = r \vc{U}$ satisfies the following conditions:
\begin{itemize}
\item {\it No vacuum.} The density is bounded below (and above) uniformly away from zero, 
\begin{equation} \label{I8}
0 < \underline{r} \leq r \leq \Ov{r}. 
\end{equation}
\item 
{\it Besov regularity.}
The density $r$, and the velocity $\vU$ belong to certain Besov spaces specified below.
\item 
{\it Positivity of the velocity gradient.}
There exists a function $D \in L^1(0,T)$ such that
\begin{equation} \label{I9}
\frac{1}{2}(\Grad \vU + \Grad \vU^t): (\xi \otimes \xi) + D |\xi|^2 \geq 0\ \mbox{for any}\ \xi \in R^N.  
\end{equation}

\end{itemize}

\begin{Remark} \label{IR1}
Note that \eqref{I9} in fact does not require positivity of the velocity gradient. It may be seen as a one sided Lipschitz condition
in the spirit of Bouchut and James \cite{BouJam2}, \cite{BouJam1}. As observed in \cite{FeKr2014}, the 1-D rarefaction waves satisfy 
\eqref{I9} with $D = 0$.
 
\end{Remark}

The assumption of Besov regularity seems quite natural in the context of compressible fluids, cf. Chen and Glimm \cite{CheGli}. Here, 
we are motivated by the proof of the one sided implication in Onsager's conjecture by 
Constantin, E, and Titi \cite{ConETit}, and its subsequent generalization to the compressible Euler system in 
\cite{FeGwGwWi}. In particular, we use the fact the Besov functions can be regularized by convolution kernels and the resulting commutators with non--linear superpositions can be effectively controlled.

The paper is organized as follows. In Section \ref{P}, we introduce the basic concepts and state our main result on uniqueness of weak solutions (see Theorem \ref{TP1}) Our strategy is based on regularizing the weak solution 
$r$, $\vU$ and using the resulting expressions as test functions in the relative energy inequality \eqref{I6}, see Section \ref{R}.
Next, we perform the limit in the regularized relative energy inequality. To this end, 
in Section \ref{C},
we prove certain commutator estimates that may be of independent interest. Finally, in Section \ref{D}, we extend the uniqueness result to the more general class of \emph{dissipative} solutions introduced recently in \cite{BreFeiHof19} (see Theorem \ref{DT1}). The paper is concluded by a short discussion how to accommodate solutions of the Riemann problem in Section \ref{Z}.

\section{Preliminaries, main results}
\label{P}  

We say that $\vr$, $\vm$ is an \emph{admissible weak solution} to the Euler system \eqref{I1}--\eqref{I3} 
in $(0,T) \times \Q$ if the following 
integral identities are satisfied:
\begin{equation} \label{P1}
\int_0^\tau \intO{ \left[ \vr \partial_t \varphi + \vm \cdot \Grad \varphi \right] } \dt = \intO{\vr(\tau, \cdot) \varphi(\tau, \cdot)}- \intO{\vr_0 \varphi(0, \cdot)}
\end{equation}
for any $0 \leq \tau < T$, and
any $\varphi \in C^1_c([0, T) \times \Q)$; 
\begin{equation} \label{P2}
\begin{split}
\int_0^\tau &\intO{ \left[ \vm \cdot \partial_t \bfphi + \frac{\vm \otimes \vm}{\vr}: \Grad \bfphi + p(\vr) 
\Div \bfphi \right] } \dt \\&= \intO{ \vm(\tau, \cdot) \cdot \bfphi(\tau, \cdot) } - \intO{ \vm_0 \cdot \bfphi(0, \cdot) }
\end{split}
\end{equation}
for any $0 \leq \tau < T$, and
any $\bfphi \in C^1_c([0, T) \times \Q; R^N)$;
\begin{equation} \label{P3}
\intO{ \left[ \frac{1}{2} \frac{ |\vm|^2 }{\vr} + P(\vr) \right](\tau, \cdot) } \leq
\intO{ \left[ \frac{1}{2} \frac{ |\vm|^2 }{\vr} + P(\vr) \right](s, \cdot) } \leq  
\intO{ \left[ \frac{1}{2} \frac{ |\vm_0|^2 }{\vr_0} + P(\vr_0) \right] }
\end{equation}
for any $0 \leq \tau < T$ and a.a. $0 < s < \tau$.

\subsection{Besov spaces} 

For $0 < \alpha < 1$, $1 \leq p < \infty$, 
and a domain $Q \subset \Ov{Q} \subset (0,T) \times \Q$, 
we introduce the Besov norm  
\begin{equation} \label{BN}
\| v \|_{B^{\alpha, \infty}_p (Q)} = \| v \|_{L^p(Q)} + \sup_{\eta \in R^{N+1},
\eta \ne 0, Q + \eta \subset (0,T) \times \Q} \frac{ \| v(\cdot + \eta) - v(\cdot) \|_{L^p(Q)} }{|\eta|^\alpha}.
\end{equation}

Let $[v]_\ep \equiv \eta_\ep * v$ denote the regularization via convolution with a family of regularizing kernels. 
The following estimates are well known, see e.g. Constantin et al. \cite{ConETit}:
\begin{equation} \label{P4}
\left\| [v]_\ep - v \right\|_{L^p(Q)} \leq \ep^\alpha \| v \|_{B^{\alpha, \infty}_p (Q)},
\end{equation}
\begin{equation} \label{P5}
\left\| \Grad [v]_\ep \right\|_{L^p(Q)} \leq \ep^{\alpha - 1} \| v \|_{B^{\alpha, \infty}_p (Q)}.
\end{equation}

\subsection{Main result}

Having collected all necessary preliminary material
we are ready to establish our main result concerning uniqueness in the class of \emph{weak} solutions. 

\begin{Theorem} \label{TP1}

Let $\vr$, $\vm$ be an admissible weak solution of the Euler system \eqref{I1}--\eqref{I3} in $[0,T) \times \Q$ with the initial data
\[
\vr_0 \geq 0,\ \intO{ \left[ \frac{1}{2} \frac{|\vm_0|^2}{\vr_0} + P(\vr_0) \right] } < \infty.
\]
Let $\tvr = r$, $\tilde \vm = r \vU$ be another weak solution of the same problem satisfying:
\begin{itemize}
\item
\begin{equation} \label{P6}
\begin{split}
r &\in B^{\alpha, \infty}_{p}((\delta,T) \times \Q)) \cap C([0,T]; L^1(\Q)),\\ 
\vU &\in B^{\alpha, \infty}_{p}((\delta,T) \times \Q; R^N)) \cap C([0,T]; L^1(\Q; R^N)),
\ \mbox{for any}\ \delta > 0,
\end{split}
\end{equation}
with
\[
\alpha > \frac{1}{2}, \ p \geq \frac{4 \gamma}{\gamma - 1};
\]
\item
\[
0< 
\underline{r} \leq r(t,x) \leq \Ov{r}, \ | \vU(t,x)| \leq \Ov{U}\ \mbox{for a.a.} \ (t,x) \in (0,T) \times \Q; 
\]

\item

there exists $D \in L^1(0,T)$ such that
\begin{equation} \label{P7}
\intO{ \left[ - \xi \cdot \vU(\tau, \cdot) (\xi \cdot \Grad) \varphi  + D(\tau) |\xi|^2 \varphi \right] } \geq 0 
\end{equation}
for any $\xi \in R^N$, and any $\varphi \in C(\Q)$, $\varphi \geq 0$.

\end{itemize}

Then
\[
\vr = \tvr,\ \vm = \tilde{\vm} \ \mbox{a.a. in}\ (0,T) \times \Q.
\]

\end{Theorem}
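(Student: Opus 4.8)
The plan is to feed suitably regularized versions of the ``smooth'' weak solution $r$, $\vU$ into the relative energy inequality \eqref{I6}, let the regularization parameter go to zero, and close the argument by a Gronwall lemma. Write $\vu = \vm/\vr$. For small $\ep > 0$ set $r_\ep = [r]_\ep$, $\vU_\ep = [\vU]_\ep$, the mollifications (in $x$) of $r$ and $\vU$; by the bounds on $r$ and $\vU$ these are smooth and still satisfy $0 < \underline r \le r_\ep \le \Ov r$, $|\vU_\ep| \le \Ov U$, so $(r_\ep, \vU_\ep)$ is an admissible Lipschitz test pair in \eqref{I6}. Since $(r, r\vU)$ is itself a weak solution with data $(\vr_0, \vm_0)$, one has $r(0, \cdot) = \vr_0 \in [\underline r, \Ov r]$ and $\vm_0 = \vr_0\, \vU(0, \cdot)$ with $\vU(0, \cdot) \in L^\infty(\Q; R^N)$; hence, using the $C^2$-regularity of $P$ on $[\underline r, \Ov r]$ and $[\vU(0, \cdot)]_\ep \to \vU(0, \cdot)$ in $L^2$, the initial term $\intO{ \E \big( \vr_0, \vm_0 \, | \, r_\ep(0, \cdot), \vU_\ep(0, \cdot) \big) }$ tends to $\intO{ \E \big( \vr_0, \vm_0 \, | \, \vr_0, \vU(0, \cdot) \big) } = 0$ as $\ep \to 0$.

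Next I would reorganize the right-hand side of \eqref{I6}. Testing the weak formulations \eqref{P1}, \eqref{P2} for $(r, r\vU)$ against the mollifying kernel expresses $\partial_t r_\ep$ and $\partial_t (r_\ep \vU_\ep)$ through mollified spatial fluxes; since the mollification of a product is not the product of the mollifications, this generates commutators of the type $[r\vU]_\ep - r_\ep \vU_\ep$, $[r\vU \otimes \vU]_\ep - r_\ep \vU_\ep \otimes \vU_\ep$, $[p(r)]_\ep - p(r_\ep)$ (and their $\partial_t$-analogues). Performing the algebra as in the classical weak--strong uniqueness computation --- in particular with the $\Grad P'(r_\ep)$-terms cancelling exactly as in the smooth case --- the remainder of \eqref{I6} becomes
\[
\int_0^\tau \intO{ \Big[ -\, \vr\, (\vU_\ep - \vu) \otimes (\vU_\ep - \vu) : \Grad \vU_\ep \ -\ \big( p(\vr) - p(r_\ep) - p'(r_\ep)(\vr - r_\ep) \big) \Div \vU_\ep \Big] } \dt \ +\ \int_0^\tau \intO{ R_\ep } \dt ,
\]
where $R_\ep$ gathers all the commutator contributions.

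The two structural terms I would control through the one-sided bound \eqref{P7}. Mollifying \eqref{P7} in $x$ gives, for a.e.\ $t$ and all $\xi \in R^N$, the pointwise inequality $\tfrac12 (\Grad \vU_\ep + \Grad \vU_\ep^t) : (\xi \otimes \xi) + D(t) |\xi|^2 \ge 0$; choosing $\xi = (\vU_\ep - \vu)(t, x)$ shows the first term is $\le D(t)\, \vr |\vU_\ep - \vu|^2 \le 2 D(t)\, \E(\vr, \vm \, | \, r_\ep, \vU_\ep)$, while summing over $\xi = e_1, \dots, e_N$ gives $\Div \vU_\ep + N D(t) \ge 0$, so $(\Div \vU_\ep)^- \le N D(t)$; since $p(\vr) - p(r_\ep) - p'(r_\ep)(\vr - r_\ep) \ge 0$ and, for $r_\ep \in [\underline r, \Ov r]$, is bounded by a constant multiple of the pressure part of $\E$, the second term is $\le c N D(t)\, \E(\vr, \vm \, | \, r_\ep, \vU_\ep)$. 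The commutator remainder $R_\ep$ I would estimate by the Besov inequalities \eqref{P4}, \eqref{P5}: each commutator is $O(\ep^{2\alpha})$ in a suitable $L^q$-norm, while $\| \Grad \vU_\ep \|_{L^p}$, $\| \Grad r_\ep \|_{L^p} = O(\ep^{\alpha - 1})$; since a commutator may still carry one spatial derivative, the worst term is $O(\ep^{2\alpha - 1})$, which vanishes precisely because $\alpha > \tfrac12$, provided all the products are closed, via H\"older's inequality, against the energy bounds $\vr \in L^\infty(0, T; L^\gamma(\Q))$, $\vm \in L^\infty(0, T; L^{2\gamma/(\gamma + 1)}(\Q))$ --- which is where $p \ge \tfrac{4\gamma}{\gamma - 1}$ is needed --- so that $\int_0^\tau \intO{ R_\ep } \dt \to 0$. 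Passing to the limit $\ep \to 0$ (dominated convergence for the relative-energy and $D(t)$-terms, the relative energy being uniformly bounded in $t$ by energy and mass conservation) I would arrive at
\[
\intO{ \E(\vr, \vm \, | \, r, \vU)(\tau, \cdot) } \le C \int_0^\tau D(t) \intO{ \E(\vr, \vm \, | \, r, \vU)(t, \cdot) } \dt \quad \text{for a.e.\ } \tau \in (0, T) ,
\]
and since $D \in L^1(0, T)$, Gronwall's lemma forces $\E(\vr, \vm \, | \, r, \vU) \equiv 0$; as $\E$ vanishes only when $\vr = r$ and $\vm = r \vU$, this yields the assertion.

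The hard part is the passage to the limit in the regularized relative energy inequality: the gradients of the mollified Besov functions blow up like $\ep^{\alpha - 1}$, and the argument hinges on showing that this is exactly absorbed by the $O(\ep^{2\alpha})$ smallness of the commutators --- including the one, $[p(r)]_\ep - p(r_\ep)$, coming from the nonlinear pressure --- so that the net contribution disappears in the limit precisely under $\alpha > \tfrac12$ together with $p \ge \tfrac{4\gamma}{\gamma - 1}$. Establishing these commutator bounds is exactly the content of Section \ref{C}.
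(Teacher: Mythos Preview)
Your overall strategy---regularize $(r,\vU)$, insert into the relative energy inequality, absorb the two quadratic terms via the one--sided bound \eqref{P7}, kill the commutators using the Besov regularity with $\alpha>\tfrac12$, and close by Gronwall---is exactly the paper's. Two implementation choices, however, differ from the paper and leave genuine gaps.

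First, you mollify only in $x$. For $(r_\ep,\vU_\ep)$ to be an admissible test pair in \eqref{I6} it must be Lipschitz in $(t,x)$. Spatial smoothness is free, but the time regularity of $\vU_\ep$ is not: testing \eqref{P1} and \eqref{P2} against the spatial kernel yields $\partial_t[r]_\ep$ and $\partial_t[r\vU]_\ep$, not $\partial_t[\vU]_\ep$ or $\partial_t([r]_\ep[\vU]_\ep)$---there is no equation for $\vU$ alone, so your claim that ``$(r_\ep,\vU_\ep)$ is an admissible Lipschitz test pair'' is unjustified. The paper instead mollifies in space \emph{and} time; then $[r]_\ep$, $[\vU]_\ep$ are automatically $C^\infty$ in $(t,x)$, and the additional commutator $\partial_t([r]_\ep[\vU]_\ep)-\partial_t[r\vU]_\ep$ appearing in \eqref{R5} is handled by the same Lemma~\ref{LC1} (applied with $y=(t,x)\in R^{N+1}$, so that $\nabla_y$ contains $\partial_t$) as the purely spatial commutators. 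This is precisely why that lemma is formulated on a domain in $R^M$ rather than $R^N$.

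Second, you run the argument directly on $[0,\tau]$ and handle the initial term by mollifying the data. But hypothesis \eqref{P6} gives Besov regularity only on $(\delta,T)\times\Q$ for each $\delta>0$; the Besov norms may blow up as $\delta\to 0$, so your assertion $\int_0^\tau\!\intO{R_\ep}\dt\to 0$ is not justified near $t=0$ (and space--time mollification is not even defined there). The paper therefore uses the localized version \eqref{R1} of the relative energy inequality on $[s,\tau]$ with $s>0$, passes $\ep\to 0$ there to obtain \eqref{C1}, and only afterwards sends $s\to 0$ by combining the energy inequality \eqref{P3}, the weak continuity \eqref{cont} of $(\vr,\vm)$, and the strong continuity of $(r,\vU)$ assumed in \eqref{P6} to show that the relative energy at time $s$ tends to zero.
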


\begin{Remark} \label{PR1}

Strictly speaking, the weak solution $r$, $\vU$ need not satisfy the energy inequality \eqref{I5}. Sufficient conditions 
for \eqref{I5} to hold can be found in \cite{FeGwGwWi}.  

\end{Remark}

\begin{Remark} \label{PR2}

Condition \eqref{P6} should be understood in the sense that the Besov norm introduced in \eqref{BN} 
is bounded by a constant that may depend on $\delta > 0$.

\end{Remark}

The following two sections are devoted to the proof of Theorem \ref{TP1}. Generalizations to the class of dissipative solutions 
are obtained in Section \ref{D}.

\section{Relative energy inequality}
\label{R}

Let $\vr$, $\vm$ be an admissible weak solution to the Euler system \eqref{I1}--\eqref{I3}. We rewrite the relative energy inequality 
\eqref{I6} in a more convenient form:
\begin{equation} \label{R1}
\begin{split}
\int_{\Q} &\E \left( \vr, \vm \Big| r , \vc{U} \right) (\tau, \cdot) \dx \leq \intO{ \E \left( \vr, \vm \Big| r, 
\vc{U} \right) 
(s, \cdot) }\\
&- \int_s^\tau \intO{ \left[ \vr \left( \vU - \frac{\vm}{\vr} \right) \cdot \Grad \vU \cdot \left( \vU - \frac{\vm}{\vr} \right) 
+ \Big( p(\vr) - p'(r) (\vr - r) - p(r) \Big) \Div \vU \right]} \dt 
\\
&+ \int_s^\tau \intO{\Big[  \partial_t (r  \vU) + 
\Div\left( r \vU \otimes \vU  \right) + \Grad p(r) \Big] \cdot \frac{1}{r} \Big( \vr \vU - {\vm} \Big)  
 }\dt
\\
&+ \int_s^\tau \intO{ \Big[ \partial_t r + \Div (r \vU ) \Big] \left[ \left( 1 - \frac{\vr}{r}  \right) p'(r) + 
\frac{1}{r} \vU \cdot \left( \vm - \vr \vU \right) \right]  } \dt 
\end{split}
\end{equation} 
for any $0 \leq \tau < T$, a.a. $0 < s < \tau$, and any $r > 0$, $\vU$ continuously differentiable in $[s, T)$. Note 
that this is in fact a localized version of \eqref{I6} that can be obtained thanks to the appropriate form of the 
energy inequality \eqref{P3}. Moreover, inequality \eqref{R1} remains valid for $s=0$ as long as the relative energy at 
$s = 0$ is expressed in terms for the initial data.

Now, we fix $\tau > 0$, $0 < s < \tau$ and consider the space--time regularization $[r]_\ep$, $[\vU]_\ep$ as test functions in \eqref{R1}:
\begin{equation} \label{R2}
\begin{split}
\int_{\Q} &\E \left( \vr, \vm \Big| [r]_\ep , [\vc{U}]_\ep \right) (\tau, \cdot) \dx \leq \intO{ \E \left( \vr, \vm \Big| [r]_\ep, 
[\vc{U}]_\ep \right) 
(s, \cdot) }\\
&- \int_s^\tau \intO{ \vr \left( [\vU]_\ep - \frac{\vm}{\vr} \right) \cdot \Grad [\vU]_\ep \cdot \left( [\vU]_\ep - \frac{\vm}{\vr} \right) } \dt  \\ 
&- \int_s^\tau \intO{ \Big( p(\vr) - p'([r]_\ep) (\vr - [r]_\ep) - p([r]_\ep) \Big) \Div [\vU]_\ep } \dt 
\\
&+ \int_s^\tau \intO{\Big[  \partial_t ([r]_\ep  [\vU]_\ep) + 
\Div\left( [r]_\ep [\vU]_\ep \otimes [\vU]_\ep  \right) + \Grad p([r]_\ep) \Big] \cdot \frac{1}{[r]_\ep} \Big( \vr [\vU]_\ep - {\vm} \Big)  
 }\dt
\\
&+ \int_s^\tau \intO{ \Big[ \partial_t [r]_\ep + \Div ([r]_\ep [\vU]_\ep ) \Big] \left[ \left( 1 - \frac{\vr}{[r]_\ep}  \right) p'([r]_\ep) + 
\frac{1}{[r]_\ep} [\vU]_\ep \cdot \left( \vm - \vr [\vU]_\ep \right) \right]  } \dt. 
\end{split}
\end{equation} 

It follows from \eqref{P7} that 
\begin{equation} \label{R3}
\xi \cdot \Grad [\vU]_\ep \cdot \xi + |\xi|^2 [D]_\ep  \geq 0 \ \mbox{for any}\ \xi \in R^N. 
\end{equation}
Consequently, thanks to the isentropic pressure, relation \eqref{R2} reduces to
\begin{equation} \label{R4}
\begin{split}
\int_{\Q} &\E \left( \vr, \vm \Big| [r]_\ep , [\vc{U}]_\ep \right) (\tau, \cdot) \dx \leq \intO{ \E \left( \vr, \vm \Big| [r]_\ep, 
[\vc{U}]_\ep \right) 
(s, \cdot) } + \int_s^\tau D \E \left( \vr, \vm \Big| [r]_\ep , [\vc{U}]_\ep \right) \dt \\
&+ \int_s^\tau \intO{\Big[  \partial_t ([r]_\ep  [\vU]_\ep) + 
\Div\left( [r]_\ep [\vU]_\ep \otimes [\vU]_\ep  \right) + \Grad p([r]_\ep) \Big] \cdot \frac{1}{[r]_\ep} \Big( \vr [\vU]_\ep - {\vm} \Big)  
 }\dt
\\
&+ \int_s^\tau \intO{ \Big[ \partial_t [r]_\ep + \Div ([r]_\ep [\vU]_\ep ) \Big] \left[ \left( 1 - \frac{\vr}{[r]_\ep}  \right) p'([r]_\ep) + 
\frac{1}{[r]_\ep} [\vU]_\ep \cdot \left( \vm - \vr [\vU]_\ep \right) \right]  } \dt. 
\end{split}
\end{equation} 

Next, as $r$ and $\vU$ are weak solutions to the Euler system, we get 
\[
\partial_t [r]_\ep = 
- \left[ \Div (r \vU) \right]_\ep,\ \partial_t [ r \vU]_\ep = 
- \left[ \Div (r \vU \otimes \vU) + \Grad p(r) \right]_\ep .
\]
Thus, finally, 
\begin{equation} \label{R5}
\begin{split}
&\int_{\Q} \E \left( \vr, \vm \Big| [r]_\ep , [\vc{U}]_\ep \right) (\tau, \cdot) \dx \leq \intO{ \E \left( \vr, \vm \Big| [r]_\ep, 
[\vc{U}]_\ep \right) 
(s, \cdot) } + \int_s^\tau D \E \left( \vr, \vm \Big| [r]_\ep , [\vc{U}]_\ep \right) \dt \\
&+ \int_s^\tau \intO{\Big[   
\Div\left( [r]_\ep [\vU]_\ep \otimes [\vU]_\ep  \right) + \Grad p([r]_\ep)
- \left[ \Div (r \vU \otimes \vU) + \Grad p(r) \right]_\ep
 \Big] \cdot \frac{1}{[r]_\ep} \Big( \vr [\vU]_\ep - {\vm} \Big)  
 }\dt
\\
&+ \int_s^\tau \intO{\Big[   
\partial_t ([r]_\ep [\vU]_\ep) - \partial_t [r \vU ]_\ep 
\Big] \cdot \frac{1}{[r]_\ep} \Big( \vr [\vU]_\ep - {\vm} \Big)  
 }\dt
\\
&+ \int_s^\tau \intO{ \Big[ \Div ([r]_\ep [\vU]_\ep )  - \left[ \Div (r \vU) \right]_\ep
\Big] \left[ \left( 1 - \frac{\vr}{[r]_\ep}  \right) p'([r]_\ep) + 
\frac{1}{[r]_\ep} [\vU]_\ep \cdot \left( \vm - \vr [\vU]_\ep \right) \right]  } \dt. 
\end{split}
\end{equation}

In order to control the integrals on the right--hand  side of \eqref{R5}, we need the commutator estimates proved in the following section.

\section{Commutator estimates}
\label{C}

The following results is necessary to control the integrals on the right--hand side of the inequality \eqref{R5}.

\begin{Lemma} \label{LC1}

Let $Q$ be a bounded domain in $R^M$. Suppose that $\mathbb{V}: \tilde Q \to R^k$ belongs to the Besov space 
$B^{\alpha, \infty}_p(Q, R^k)$, $p \geq 2$, where $\tilde Q \subset R^M$ is another domain containing $\Ov{Q}$ in its interior.
Let $\eta^\ep$ be a standard family of regularizing kernels, ${\rm supp}[ \eta^{\ep} ] \subset \{ |y| < \ep \}$. Let 
$G : K \to R$ be a twice continuously differentiable function defined on an open set $K \subset R^k$ containing the 
closure of the range of $\mathbb{V}$.

Then 
\[
\left\| \nabla_y G( [\mathbb{V}]_\ep ) - \nabla_y [ G(\mathbb{V}) ]_\ep \right\|_{L^{\frac{p}{2}} (Q; R^M) }
\leq \ep^{2 \alpha - 1} c(\| G \|_{C^2(K)}) \left( 1 + \left\| \mathbb{V} \right\|^2_{B^{\alpha, \infty}_p (Q; R^k)} \right)
\] 
for $\nabla_y = (\partial_{y_1}, \dots, \partial_{y_M})$.

\end{Lemma}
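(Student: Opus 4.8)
The plan is to prove the commutator estimate by the standard Constantin--E--Titi device, writing the difference of the two regularized quantities as an integral against the mollifier of a second-order Taylor remainder, and then estimating that remainder using the Besov modulus of continuity. First I would reduce to a pointwise identity: for a fixed $y$, write
\[
\nabla_y G([\mathbb{V}]_\ep)(y) - \nabla_y [G(\mathbb{V})]_\ep(y)
= \int \eta^\ep(z)\,\Big( G'([\mathbb{V}]_\ep(y)) - G'(\mathbb{V}(y-z)) \Big)\, \nabla_y \mathbb{V}(y-z)\, \dz + (\text{lower order}),
\]
obtained by differentiating under the convolution and adding and subtracting. The subtle point is that $\nabla_y\mathbb{V}$ is only a distribution; to avoid it I would instead integrate by parts in $z$, moving $\nabla_z$ off $\mathbb{V}(y-z)$ onto the smooth factor, so that the only derivatives landing on $\mathbb{V}$ are through $\nabla_y\eta^\ep$ applied to differences $\mathbb{V}(y-z)-\mathbb{V}(y)$. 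Concretely I would use the classical representation
\[
\nabla_y G([\mathbb{V}]_\ep) - \nabla_y[G(\mathbb{V})]_\ep
= \int \nabla_y\eta^\ep(z)\,\Big( G(\mathbb{V}(y-z)) - G(\mathbb{V}(y)) - G'([\mathbb{V}]_\ep(y))\big(\mathbb{V}(y-z)-\mathbb{V}(y)\big)\Big)\dz,
\]
valid since $\int \nabla_y\eta^\ep = 0$ and $\int \nabla_y\eta^\ep(z)\,(y-z)\dz$ is a constant that can be absorbed.

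Next I would Taylor-expand $G$ to second order about $\mathbb{V}(y)$: the integrand is controlled by $\|G\|_{C^2(K)}$ times $|\mathbb{V}(y-z)-\mathbb{V}(y)|^2$ plus $\|G\|_{C^2(K)}\,|[\mathbb{V}]_\ep(y)-\mathbb{V}(y)|\,|\mathbb{V}(y-z)-\mathbb{V}(y)|$. Using $|\nabla_y\eta^\ep(z)|\le c\,\ep^{-M-1}\mathbf{1}_{\{|z|<\ep\}}$ and Jensen/Minkowski in the convolution, I would pass to the $L^{p/2}(Q)$ norm. Each term becomes, up to constants, $\ep^{-1}$ times an average over $|z|<\ep$ of $\ep^{-\alpha}\|\mathbb{V}(\cdot-z)-\mathbb{V}(\cdot)\|_{L^p(Q)}\cdot\ep^{\alpha}$-type factors; invoking the Besov seminorm \eqref{BN} (and \eqref{P4} for the $[\mathbb{V}]_\ep-\mathbb{V}$ factor, which contributes another $\ep^\alpha$) yields the bound $\ep^{-1}\cdot\ep^{2\alpha}\,c(\|G\|_{C^2})\,(1+\|\mathbb{V}\|^2_{B^{\alpha,\infty}_p})$, i.e. exactly $\ep^{2\alpha-1}$. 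The requirement $p\ge 2$ enters precisely because the quadratic term forces us to work in $L^{p/2}$, and Hölder must still make sense there; the $1+\|\cdot\|^2$ form of the right-hand side comes from combining the linear-in-$\|\mathbb{V}\|$ and quadratic-in-$\|\mathbb{V}\|$ contributions.

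The main obstacle, and the step requiring care, is justifying the distributional manipulations rigorously for a merely Besov (hence generally non-Lipschitz) field $\mathbb{V}$: one must make sure the representation formula above holds without ever evaluating $\nabla\mathbb{V}$ pointwise, which is why I would derive everything from the mollifier-difference identity with $\nabla_y\eta^\ep$ acting on increments of $\mathbb{V}$, rather than from differentiating $G([\mathbb{V}]_\ep)$ by the chain rule and then trying to commute. A secondary technical point is the boundary: since $Q$ is bounded and $\tilde Q\supset\overline{Q}$, for $\ep$ small enough all translates $y-z$ with $y\in Q$, $|z|<\ep$ stay in $\tilde Q$, so the increments $\mathbb{V}(\cdot-z)-\mathbb{V}(\cdot)$ are well-defined on $Q$ and the Besov seminorm over $Q$ (which allows translations keeping $Q+\eta\subset\tilde Q$) controls them; I would state this restriction on $\ep$ explicitly. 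Everything else is bookkeeping with Minkowski's integral inequality and the elementary estimates \eqref{P4}, \eqref{P5}.
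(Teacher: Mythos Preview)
Your proposal is correct and follows essentially the same Constantin--E--Titi strategy as the paper: both arguments reduce the commutator to an integral of a second--order Taylor remainder against $\nabla\eta^\ep$, and both estimate that remainder via the Besov increment bound together with $|\nabla\eta^\ep|\lesssim \ep^{-M-1}$ to produce $\ep^{2\alpha-1}$. The only cosmetic difference is organizational: the paper first splits off $(G'([\mathbb{V}]_\ep)-G'(\mathbb{V}))\nabla_y[\mathbb{V}]_\ep$ and handles it directly by \eqref{P4}--\eqref{P5} with H\"older (so the remaining piece is a clean Taylor remainder based at $\mathbb{V}(y)$), whereas you keep the linear coefficient $G'([\mathbb{V}]_\ep(y))$ inside a single integral representation and then, upon Taylor expanding about $\mathbb{V}(y)$, recover exactly the same cross term $|[\mathbb{V}]_\ep-\mathbb{V}|\cdot|\mathbb{V}(\cdot-z)-\mathbb{V}|$ as an integrand; the resulting estimates are identical.
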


\begin{proof}

On one hand, 
\[
\begin{split}
\Big( \nabla_y G( [\mathbb{V}]_\ep ) - \nabla_y [ G(\mathbb{V}) ]_\ep \Big) 
= \Big( G' ( [\mathbb{V}]_\ep ) - G' (\mathbb{V}) \Big) \nabla_y [\mathbb{V}]_\ep + 
G' (\mathbb{V}) \nabla_y [\mathbb{V}]_\ep - \nabla_y [ G(\mathbb{V}) ]_\ep,
\end{split}
\]
where, by virtue of \eqref{P4}, \eqref{P5}, and H\" older's inequality, 
\[
\left\|
\Big( G' ( [\mathbb{V}]_\ep ) - G' (\mathbb{V}) \Big) \nabla_y [\mathbb{V}]_\ep \right\|_{L^{\frac{p}{2}}(Q; R^M)} \leq \ep^{2 \alpha - 1} 
c(\| G \|_{C^2(K)}) \left\| \mathbb{V} \right\|_{B^{\alpha, \infty}_{p} (Q; R^k) }.
\]

On the other hand, 
\[
\begin{split}
G' (\mathbb{V}) \nabla_y [\mathbb{V}]_\ep(y) &- \nabla_y [ G(\mathbb{V}) ]_\ep(y) = 
G'(\mathbb{V}(y)) [\nabla_y \eta^\ep * \mathbb{V}] (y) - 
[\nabla_y \eta^\ep * G(\mathbb{V}) ] (y) \\
&= G'(\mathbb{V}(y)) \int_{R^M} \nabla_y \eta^\ep (y- z) \mathbb{V}(z) {\rm d}z - 
\int_{R^M} \nabla_y \eta^\ep (y- z) G(\mathbb{V}(z)) {\rm d}z \\
&= - \int_{R^M} \nabla_y \eta^\ep (y - z) \Big[ G(\mathbb{V}(z)) + G'(\mathbb{V}(y))(\mathbb{V}(y) - \mathbb{V}(z)) 
- G(\mathbb{V}(y)) \Big] {\rm d}z 
\\
&=  \int_{R^M \setminus \{ 0 \} } |h|^{2\alpha} \nabla_y \eta^\ep (h) \frac{1}{|h|^{2\alpha}}{\Big[ G(\mathbb{V}(y-h)) + G'(\mathbb{V}(y))(\mathbb{V}(y) - \mathbb{V}(y-h)) 
- G(\mathbb{V}(y)) \Big]}\ {\rm d}h\\ 
&\leq c(\| G \|_{C^2(K)})\int_{R^M \setminus \{0\}} \frac{ |\mathbb{V} ( y - h) - \mathbb{V} (y) |^2 }{|h|^{2 \alpha}}  |h |^{2 \alpha - 1}|h||\nabla_y \eta^\ep (h)| \ {\rm d}h.   
\end{split}
\]
Consequently, by virtue of Jensen's and H\" older's inequalities,  
\[
\begin{split}
&\left\| G' (\mathbb{V}) \nabla_y [\mathbb{V}]_\ep(y) - \nabla_y [ G(\mathbb{V}) ]_\ep(y) \right\|_{L^{\frac{p}{2}}(Q; R^M)}\\ 
&\leq \ep^{2\alpha - 1} c(\| G \|_{C^2(K)}) \sup_{0< |h| \leq \ep} \left\| \frac{ \mathbb{V}(\cdot + h) - \mathbb{V}(\cdot) }{|h|^\alpha}
\right\|_{L^{{p}}(Q)}^2 \leq \ep^{2 \alpha - 1} c(\| G \|_{C^2(K)})  \left\| \mathbb{V} \right\|^2_{B^{\alpha, \infty}_p (Q; R^k)}. 
\end{split}
\]

\end{proof}

Under the hypothesis \eqref{P6}, we can perform the limit for $\ep \to 0$ in the inequality \eqref{R5}. 
Indeed, as $\vr$, $\vm$ satisfies the energy inequality, we have 
\begin{equation} \label{cont}
\vr \in C_{{\rm weak}}([0, T]; L^\gamma(\Q)), \ \vm \in C_{{\rm weak}}([0,T]; L^{\frac{2 \gamma}{\gamma+ 1}}(\Q; R^N)).
\end{equation}
Consequently, we infer that
\begin{equation} \label{C1}
\int_{\Q} \E \left( \vr, \vm \Big| r , \vc{U} \right) (\tau, \cdot) \dx \leq \intO{ \E \left( \vr, \vm \Big| r, 
\vc{U} \right) 
(s, \cdot) } + \int_s^\tau D \E \left( \vr, \vm \Big| r , \vc{U} \right) \dt 
\end{equation} 
for a.a. $0 < s < \tau$.

Thus the last step in the proof of Theorem \ref{TP1} is to let $s \to 0$. In view of the energy inequality
\eqref{P3} and the fact that $r$ and $\vU$ are uniformly bounded, it is enough to observe that 
\begin{equation} \label{cont1}
r \in C([0,T]; L^q(\Q)),\ \vU \in C([0,T]; L^q(\Q; R^N))\ \mbox{for any}\ 1 \leq q < \infty
\end{equation}
in accordance with hypothesis \eqref{P6}. Now, we choose a suitable sequence $s_n \searrow 0$ such that 
\[
\begin{split}
\int_{\Q} & \E \left(\vr, \vm \Big| r, \vU \right) (s_n, \cdot) \dx = 
\intO{ \left[ \frac{1}{2} \vr \left| \frac{\vm}{\vr} - \vU \right|^2 + P(\vr) - P'(r) (\vr - r) - P(r) \right]
(s_n, \cdot) } \\
&= \intO{ \left[ \frac{1}{2} \frac{|\vm|^2}{\vr} + P(\vr) \right](s_n, \cdot) } - \intO{ \vm \cdot \vU (s_n, \cdot) }
+ \intO{ \frac{1}{2} \vr |\vU|^2 (s_n, \cdot) }\\
&- \intO{ \left[ P'(r) (\vr - r) + P(r) \right] (s_n, \cdot)}\\ 
&\leq \intO{ \left[ \frac{1}{2} \frac{|\vm_0|^2}{\vr_0} + P(\vr_0) \right]} - \intO{ \vm \cdot \vU (s_n, \cdot) }
+ \intO{ \frac{1}{2} \vr |\vU|^2 (s_n, \cdot) }\\
&- \intO{ \left[ P'(r) (\vr - r) + P(r) \right] (s_n, \cdot)} \to 0 \ \mbox{as}\ s_n \to 0,
\end{split}
\] 
where we have used the continuity properties \eqref{cont}, \eqref{cont1} and the fact that the initial data coincide.

The proof of Theorem \ref{TP1} is complete.

\section{Dissipative solutions}
\label{D}

We conclude the paper by extending the previous uniqueness result to a large class of the \emph{dissipative solutions} to the 
Euler system. The leading idea, similar to that one proposed by Lions \cite{LI} in the context of incompressible fluids, is to consider 
the relative energy inequality \eqref{I6} as a proper definition of a class of solutions that goes even beyond the concept of weak solutions. It turns out, however, that a proper formulation is not via \eqref{I6} but rather in terms of the \emph{measure--valued} solutions
or rather their expected values. This approach has been developed in \cite{BreFeiHof19}, where it was shown that it is possible to select an appropriate dissipative solution in such a way that the resulting solution operator gives rise to a semigroup. Here, we only recall the main ideas referring the interested reader to \cite{BreFeiHof19} for details. 

\subsection{Dissipative measure valued solution}

\emph{Dissipative measure valued solution} to the Euler problem \eqref{I1}--\eqref{I3} consists of the following quantities:
 
\begin{itemize}
\item a parametrized system of probability measures (Young measure) 
\[
\mathcal{V} = \mathcal{V}_{t,x}: 
(t,x) \mapsto \mathcal{P}([0, \infty) \times R^N),\ t \geq 0,\ x \in \Q 
\]
where the symbol $\mathcal{P}$ denotes the space of Borel probability measures, 
\[
\mathcal{V} \in L^{\infty}_{{\rm weak -(*)}} ((0, \infty) \times \Q; \mathcal{P}([0, \infty) \times R^N));
\]

\item the energy concentration defect measures, 
\[
\mathfrak{C}_{\rm kin} \in L^\infty_{{\rm weak - (*)}} (0, \infty; \mathcal{M}^+(\Q)),
\ \mathfrak{C}_{\rm int} \in L^\infty_{{\rm weak - (*)}} (0, \infty; \mathcal{M}^+(\Q)) ;
\]

\item the convection and pressure concentration defect measures,
\[
\mathfrak{C}_{\rm conv} \in L^\infty_{{\rm weak - (*)}} (0, \infty; \mathcal{M}^+(S^{N-1} \times \Q )), 
\ \mathfrak{C}_{\rm press} \in L^\infty_{{\rm weak - (*)}} (0, \infty; \mathcal{M}^+(\Q)).
\]

\end{itemize}

The concentration measures are interrelated reflecting the consitutive assumptions:
\begin{equation} \label{D1}
\mathfrak{C}_{{\rm press}} = (\gamma - 1) \mathfrak{C}_{\rm int},\ \frac{1}{2} \int_{S^{N-1}} \mathfrak{C}_{\rm conv} 
\ {\rm d}S = \mathfrak{C}_{\rm kin}.
\end{equation}

Now, the total energy $\mathbb{E}$ is supposed to be non--increasing function of $t \in [0, \infty)$, 
\begin{equation} \label{D2}
\mathbb{E}(t) = \intO{ \left< \mathcal{V}_{t,x}; \frac{1}{2} \frac{ |\tilde \vm|^2}{ \tilde \vr} 
+ P(\tilde \vr) \right> } + \int_{\Ov{\Omega}} \D \mathfrak{C}_{\rm kin}(t) + \int_{\Ov{\Omega}} \D \mathfrak{C}_{\rm int}(t)  
\end{equation}
for a.a. $t > 0$.

The equation of continuity holds as  
\begin{equation} \label{D3}
\int_0^\tau \intO{ \Big[ \left< \mathcal{V}_{t,x} ; \tilde \vr \right> \partial_t \varphi + 
\left< \mathcal{V}_{t,x}; \tilde \vm \right>  \cdot \Grad \varphi \Big] } \dt = \intO{ \left< \mathcal{V}_{\tau,x}; \tvr \right> 
\varphi (\tau, \cdot)} - \intO{\vr_0 \varphi(0, \cdot)}
\end{equation}
for any $\varphi \in C^1_c([0, T) \times \Q)$. 

Finally, the momentum equation reads:
\begin{equation} \label{D4}
\begin{split}
\int_0^\tau &\intO{ \left[ \left< \mathcal{V}_{t,x}; \tilde \vm \right> \cdot \partial_t \bfphi + 
\left< \mathcal{V}_{t,x}; \frac{ \tilde \vm \otimes \tilde \vm }{\tvr} \right>: \Grad \bfphi + 
\left< \mathcal{V}_{t,x}; p(\tvr) \right> \Div \bfphi \right] } \dt 
\\
&+ \int_0^\tau \left( \int_{S^{N-1}} \int_\Omega \Grad \bfphi : (\xi \otimes \xi)   \D \mathfrak{C}_{\rm conv}(t, \xi,x) \right) \dt +\int_0^\tau \left( \int_{\Q} \Div \bfphi \ \D \mathfrak{C}_{{\rm press}}(t,x) \right) \dt
\\
&= \intO{ \left< \mathcal{V}_{\tau,x}; \tilde \vm \right> \cdot 
\bfphi (\tau, \cdot)} - \intO{\vm_0 \cdot \bfphi(0, \cdot)}
\end{split}
\end{equation}
for any $\bfphi \in C^1_c([0, T) \times \Q; R^N)$.

Following \cite{BreFeiHof19} we define the \emph{dissipative solutions} to the Euler system as follows:

\begin{Definition}[Dissipative solution] \label{DD1}

A \emph{dissipative solution} to the Euler system \eqref{I1}--\eqref{I3}, and the initial energy $E_0$ is a trio 
\[
\begin{split}
\vr \in C_{{\rm weak, loc}}([0, \infty); L^\gamma (\Q)),\ 
&\vm \in C_{{\rm weak, loc}}([0, \infty); L^\Gamma (\Q; R^N)),\\ 
&E: [0, \infty) \to [0, \infty)
\ \mbox{non--decreasing}
\end{split}
\]
such that there exists a dissipative measure--valued solution in the sense specified above so that
\[
\vr(t,x) = \left< \mathcal{V}_{t,x}; \tvr \right>,\ 
\vm(t,x) = \left< \mathcal{V}_{t,x}; \tilde \vm \right>, \mbox{with energy}\ E, \ E(0-) \equiv E_0.
\]

\end{Definition}

\subsection{Relative energy inequality for dissipative solutions}

The relative energy inequality \eqref{R1} can be generalized to the class of dissipative solutions. First we define, formally, 
the relative energy:
\[
\intO{ \mathcal{E} \left( \vr, \vm, E \Big| r, \vU \right) } \equiv E - \intO{ \vm \cdot \vU } 
+ \frac{1}{2} \intO{ \vr |\vU|^2 } - \intO{ P'(r) \vr } + \intO{ p(r) }.
\]
In accordance with Definition \eqref{DD1}, the relative entropy is a non--decreasing function of $t$, having well defined right and left 
limits at any $t \in [0, \infty)$, with the convention $E(0-) = E_0$. 

Using \eqref{D1}--\eqref{D4} we obtain, after a bit tedious but straightforward manipulation,
a variant of the relative energy inequality \eqref{I6}: 
\begin{equation} \label{D5}
\begin{split}
\int_{\Q} &\mathcal{E} \left( \vr, \vm, E \Big| r, \vU \right)(\tau+) \ \dx
\leq  \intO{ \mathcal{E} \left( \vr, \vm, E \Big| r, \vU \right)(s-)  }
\\
&+ \int_s^\tau \intO{\left[ (\vr \vU - \vm) \partial_t \vU + 
\Grad \vU : \left< \mathcal{V}_{t,x}; \tilde \vm \otimes \Big( \vU - \frac{\tilde \vm}{\tilde \vr} \Big) \right> +
 \Big( (p(r) - \left< \mathcal{V}_{t,x}; p(\tvr) \right> \Big) \Div \vU  \right] }\dt
\\
&+\int_s^\tau \intO{ \left[ (r - \vr) \partial_t P'(r) + (r \vc{U} - \vm ) \cdot \Grad P'(r) \right] }\dt\\
&-  \int_s^\tau \left( \int_{S^{N-1}} \int_\Omega \Grad \vU : (\xi \otimes \xi)   \D \mathfrak{C}_{\rm conv}(t, \xi,x) \right) \dt
 - (\gamma - 1) \int_s^\tau \left( \int_{\Q} \Div \vU \ \D \mathfrak{C}_{{\rm int}}(t,x) \right) \dt 
\end{split}
\end{equation} 
for any continuously differentiable $r > 0$, $\vU$, and
for any $\tau > 0$, $0 \leq s \leq \tau$. 

Finally, exactly as in Section \ref{R}, we may rewrite \eqref{D5} in the form 
\begin{equation} \label{D6}
\begin{split}
\int_{\Q} &\mathcal{E} \left( \vr, \vm, E \Big| r, \vU \right)(\tau+) \ \dx
\leq  \intO{ \mathcal{E} \left( \vr, \vm, E \Big| r, \vU \right)(s-)  }\\
&- \int_s^\tau \intO{ \Grad \vU : \left< \mathcal{V}_{t,x} ; \tvr \left( \vU - \frac{\tilde \vm}{\tvr} \right) \otimes \left( \vU - \frac{\tilde \vm}{\tvr} \right) \right> 
} \dt \\
- &\int_s^\tau \intO{ \Big( \left< \mathcal{V}_{t,x}; p(\tvr) \right> - p'(r) (\vr - r) - p(r) \Big) \Div \vU } \dt 
\\
&+ \int_s^\tau \intO{\Big[  \partial_t (r  \vU) + 
\Div\left( r \vU \otimes \vU  \right) + \Grad p(r) \Big] \cdot \frac{1}{r} \Big( \vr \vU - {\vm} \Big)  
 }\dt
\\
&+ \int_s^\tau \intO{ \Big[ \partial_t r + \Div (r \vU ) \Big] \left[ \left( 1 - \frac{\vr}{r}  \right) p'(r) + 
\frac{1}{r} \vU \cdot \left( \vm - \vr \vU \right) \right]  } \dt 
\\
&-  \int_s^\tau \left( \int_{S^{N-1}} \int_\Omega \Grad \vU : (\xi \otimes \xi)   \D \mathfrak{C}_{\rm conv}(t, \xi,x) \right) \dt
 - (\gamma - 1) \int_s^\tau \left( \int_{\Q} \Div \vU \ \D \mathfrak{C}_{{\rm int}}(t,x) \right) \dt 
\end{split}
\end{equation} 
for any continuously differentiable $r > 0$, $\vU$, and
for any $\tau > 0$, $0 \leq s \leq \tau$.

\subsection{Uniqueness in the class of dissipative solutions}

With the relative energy inequality \eqref{D6} at hand, we may use the relations \eqref{D1}, \eqref{D2} between the concentration defect 
measures and follow step by step the arguments of Sections \ref{R}, \ref{C} to extend Theorem \ref{TP1} to the class of dissipative solutions: 
\begin{Theorem} \label{DT1}

Let $[\vr, \vm, E]$ be a dissipative solution of the Euler system \eqref{I1}--\eqref{I3}, with the initia data 
\[
\vr_0 \geq 0,\ E_0 = \intO{ \left[ \frac{1}{2} \frac{|\vm_0|^2}{\vr_0} + P(\vr_0) \right] } < \infty.
\]
Let $\tvr = r$, $\tilde \vm = r \vU$ be a weak solution of the same problem, with the initial energy
\[
\intO{ \left[ \frac{1}{2} r_0^2 |\vU_0|^2 + P(r_0) \right] } = E_0,
\]
satisfying:
\begin{itemize}
\item
\[
\begin{split}
r &\in B^{\alpha, \infty}_{p}((\delta,T) \times \Q)) \cap C([0,T]; L^1(\Q)),\\ 
\vU &\in B^{\alpha, \infty}_{p}((\delta,T) \times \Q; R^N)) \cap C([0,T]; L^1(\Q; R^N)),
\ \mbox{for any}\ \delta > 0,
\end{split}
\]
with
\[
\alpha > \frac{1}{2}, \ p \geq \frac{4 \gamma}{\gamma - 1};
\]
\item
\[
0< 
\underline{r} \leq r(t,x) \leq \Ov{r}, \ | \vU(t,x)| \leq \Ov{U}\ \mbox{for a.a.} \ (t,x) \in (0,T) \times \Q; 
\]
\item
there exists $D \in L^1(0,T)$ such that
\[
\intO{ \left[ - \xi \cdot \vU(\tau, \cdot) (\xi \cdot \Grad) \varphi  + D(\tau) |\xi|^2 \varphi \right] } \geq 0 
\]
for any $\xi \in R^N$, and any $\varphi \in C(\Q)$, $\varphi \geq 0$.

\end{itemize}

Then
\[
\vr = \tvr,\ \vm = \tilde{\vm} \ \mbox{a.a. in}\ (0,T) \times \Q.
\]

\end{Theorem}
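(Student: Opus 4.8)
The plan is to mimic the proof of Theorem~\ref{TP1} verbatim, working now from the generalized relative energy inequality \eqref{D6} instead of \eqref{R1}, and using the structural relations \eqref{D1}--\eqref{D2} to absorb the concentration defect measures into the relative energy. First I would take the Lipschitz test functions $r > 0$, $\vU$ to be the convolution regularizations $[r]_\ep$, $[\vU]_\ep$ of the weak solution, exactly as in Section~\ref{R}. The terms not involving the defect measures then transform precisely as in the passage from \eqref{R1} to \eqref{R5}: one uses \eqref{R3} (a consequence of \eqref{P7}) together with the isentropic form of the pressure to bound the velocity--gradient and pressure terms by $\int_s^\tau D\, \E(\vr,\vm\,|\,[r]_\ep,[\vU]_\ep)\dt$, and one replaces $\partial_t [r]_\ep$, $\partial_t[r\vU]_\ep$ by the spatial fluxes using the fact that $r$, $\vU$ solve the continuity and momentum equations weakly. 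This produces an inequality identical to \eqref{R5} except for two extra terms, namely
\[
-\int_s^\tau \left( \int_{S^{N-1}} \int_\Omega \Grad [\vU]_\ep : (\xi \otimes \xi)\ \D \mathfrak{C}_{\rm conv} \right) \dt
- (\gamma-1) \int_s^\tau \left( \int_{\Q} \Div [\vU]_\ep \ \D \mathfrak{C}_{\rm int} \right) \dt.
\]

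The key observation is that these two defect terms are themselves controlled by $D$ times the defect part of the relative energy. Indeed, by \eqref{R3} and the fact that $\mathfrak{C}_{\rm conv} \geq 0$ is a measure on $S^{N-1}\times\Q$ with $\frac12\int_{S^{N-1}}\mathfrak{C}_{\rm conv}\,\D S = \mathfrak{C}_{\rm kin}$, we get
\[
-\int_{S^{N-1}}\int_\Omega \Grad[\vU]_\ep:(\xi\otimes\xi)\ \D\mathfrak{C}_{\rm conv}
\leq [D]_\ep \int_{S^{N-1}}\int_\Omega |\xi|^2 \ \D\mathfrak{C}_{\rm conv} = 2[D]_\ep \int_{\Ov\Omega} \D\mathfrak{C}_{\rm kin},
\]
and similarly $-(\gamma-1)\Div[\vU]_\ep = -(\gamma-1)\,\mathrm{tr}(\Grad[\vU]_\ep)$ is bounded using \eqref{R3} applied to the coordinate directions by a constant multiple of $[D]_\ep$, so that the $\mathfrak{C}_{\rm int}$ term is dominated by $c[D]_\ep \int_{\Ov\Omega}\D\mathfrak{C}_{\rm int}$. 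Since the relative energy $\int_\Q \mathcal{E}(\vr,\vm,E\,|\,[r]_\ep,[\vU]_\ep)$ in the dissipative setting equals $\int_\Q \langle \mathcal{V};\tfrac12\tilde\vr|\tilde\vm/\tilde\vr - [\vU]_\ep|^2 + P(\tilde\vr)-P'([r]_\ep)(\tilde\vr-[r]_\ep)-P([r]_\ep)\rangle + \int_{\Ov\Omega}\D\mathfrak{C}_{\rm kin} + \int_{\Ov\Omega}\D\mathfrak{C}_{\rm int}$ by \eqref{D2}, the defect integrals get absorbed into the $\int_s^\tau D\,\mathcal{E}\dt$ term on the right, up to harmless constants that can be folded into $D$.

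Next I would pass to the limit $\ep \to 0$. The commutator estimates of Section~\ref{C}, in particular Lemma~\ref{LC1} applied with $G$ running over $P'$, the pressure $p$, and the relevant smooth nonlinearities, control the flux--difference terms $\Div([r]_\ep[\vU]_\ep\otimes[\vU]_\ep) + \Grad p([r]_\ep) - [\Div(r\vU\otimes\vU)+\Grad p(r)]_\ep$ and the time--derivative commutator $\partial_t([r]_\ep[\vU]_\ep) - \partial_t[r\vU]_\ep$ in $L^{p/2}$ with a bound of order $\ep^{2\alpha-1}\to 0$ since $\alpha > 1/2$; the factors $\tfrac{1}{[r]_\ep}(\vr[\vU]_\ep - \vm)$ multiplying them are bounded in the dual exponent thanks to $\underline r \leq r \leq \Ov r$, $|\vU|\leq \Ov U$, the exponent condition $p \geq \tfrac{4\gamma}{\gamma-1}$, and the fact that $\vr \in C_{\rm weak}([0,T];L^\gamma)$, $\vm \in C_{\rm weak}([0,T];L^{2\gamma/(\gamma+1)})$ as in \eqref{cont}. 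The defect terms now read $-\int\Grad\vU:(\xi\otimes\xi)\D\mathfrak{C}_{\rm conv} - (\gamma-1)\int\Div\vU\,\D\mathfrak{C}_{\rm int}$ with the genuine $\vU$, which is still $\geq -cD$ times the defect energy by \eqref{P7} (or rather its limiting form) and \eqref{D1}; hence the limit inequality becomes $\int_\Q \mathcal{E}(\vr,\vm,E\,|\,r,\vU)(\tau+) \leq \int_\Q \mathcal{E}(\vr,\vm,E\,|\,r,\vU)(s-) + c\int_s^\tau D\,\mathcal{E}\dt$. Finally, letting $s \to 0$: since $r$, $\vU \in C([0,T];L^q)$ for every finite $q$ by \eqref{cont1}, and the initial energies match ($E(0-) = E_0 = \int_\Q[\tfrac12 r_0^2|\vU_0|^2 + P(r_0)]$), the right--hand side at $s=0$ is exactly $E_0 - \int\vm_0\cdot\vU_0 + \tfrac12\int\vr_0|\vU_0|^2 - \int P'(r_0)\vr_0 + \int P(r_0)$, which one checks equals $\int_\Q \mathcal{E}(\vr_0,\vm_0\,|\,r_0,\vU_0)$, and this vanishes because the data coincide (note $\vr_0 = r_0$, $\vm_0 = r_0\vU_0$ is forced: both solutions launch from $(\vr_0,\vm_0)$, so $r_0 = \vr_0$ and $\vU_0 = \vm_0/\vr_0$ where $\vr_0 > 0$, while on the vacuum set $\mathcal{E}(\vr_0,\vm_0\,|\,r_0,\vU_0) = 0$ as well). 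Grönwall's lemma with $D \in L^1(0,T)$ then forces $\int_\Q\mathcal{E}(\vr,\vm,E\,|\,r,\vU)(\tau) = 0$ for a.a.\ $\tau$, and convexity of $\mathcal{E}$ in $(\vr,\vm)$ together with the defect measures vanishing yields $\vr = r$, $\vm = r\vU$ and $\mathcal{V}_{t,x} = \delta_{(r(t,x),r(t,x)\vU(t,x))}$ a.e.

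The main obstacle I anticipate is bookkeeping rather than conceptual: verifying that the "tedious but straightforward" derivation of \eqref{D5}--\eqref{D6} from \eqref{D1}--\eqref{D4} genuinely produces exactly the claimed terms (the paper asserts this without proof), and then tracking the defect measures carefully enough through the $\ep$--regularization to see that the $\mathfrak{C}_{\rm conv}$ and $\mathfrak{C}_{\rm int}$ contributions are indeed sign--definite after testing with $[\vU]_\ep$ and do not generate any uncontrolled commutator of their own — they do not, because $\mathfrak{C}_{\rm conv}$, $\mathfrak{C}_{\rm int}$ are multiplied only by $\Grad[\vU]_\ep$, $\Div[\vU]_\ep$, which converge strongly in $L^1_{t,x}$ against these finite measures, with no $\vr$ or $\vm$ present. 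Everything else is a line--by--line transcription of Sections~\ref{R} and~\ref{C}.
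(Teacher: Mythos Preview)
Your proposal is correct and follows exactly the route the paper indicates: the paper's own proof of Theorem~\ref{DT1} consists of the single sentence that one repeats Sections~\ref{R} and~\ref{C} with \eqref{D6} in place of \eqref{R1}, using \eqref{D1}--\eqref{D2} to control the concentration defects. You have correctly identified and carried out the one genuinely new step, namely that the two defect integrals are absorbed by the one--sided Lipschitz bound \eqref{R3}: since $|\xi|^2 = 1$ on $S^{N-1}$, relation \eqref{D1} gives $\int_{S^{N-1}\times\Omega}\D\mathfrak{C}_{\rm conv} = 2\int_{\Ov\Omega}\D\mathfrak{C}_{\rm kin}$, and summing \eqref{R3} over the coordinate directions gives $-\Div[\vU]_\ep \leq N[D]_\ep$, so both contributions are dominated by (a multiple of) $D$ times the defect part of the relative energy, exactly as you wrote.
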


\section{Riemann problem and non--periodic boundary conditions}
\label{Z} 

We conclude by a short discussion how to accommodate different kind of boundary conditions. First easy observation is that 
the above results are valid for an arbitrary flat torus, 
\[
\Omega = \Pi_{i = 1}^N [ - L_i, L_i ]|_{\{ -L_i,L_i \} }.
\]

Now, consider the situation 
\[
\Omega = R \times \left( [-1,1]|_{-1,1} \right)^2,
\]
where the ``regular solution'' $r$, $\vU$, depends only on the variable $x_1$ and the velocity has only one non--zero component, 
\[
r(t, x_1, x_2, x_3) = r(t,x_1), \ \vU(t, x_1, x_2, x_3) = [U(t,x_1),0,0],\ r, \ U \ \mbox{extended to be constant in}\ (x_2, x_3).
\]
Note that the one--sided Lipschitz condition \eqref{P7} reduces to 
\[
U(t, x_1) + D(t) x_1 \ \mbox{non--decreasing for}\ x_1 \in R, 
\]
which includes, in particular, the rarefaction waves studied in \cite{CheChe} and \cite{FeKr2014}. Motivated \cite{CheChe}, we impose 
Riemann like far field conditions, 
\[
r(t,x_1) = r_L > 0,\ U(t, x_1) = U_L \ \mbox{for}\ x_1 < - M,\ 
r(t,x_1) = r_R > 0,\ U(t, x_1) = U_R \ \mbox{for}\ x_1 > M.
\]

Given $T > 0$, our goal is to find an extended solution $\tilde r$, $\tilde U$ satisfying 
\[
\begin{split}
\tilde r(t,x_1) &= r(t, x_1) , \ \tilde U (t,x_1) = U (t,x_1) \ \mbox{whenever}\ t \in [0,T],\ x_1 \leq M,\\ 
\tilde r(t, x_1) &= r_L,\ U(t,x_1) = U_L \ \mbox{if}\ t \in [0,T], \ x_1 > K 
\ \mbox{for some}\ K > M, \\
(r, U) &\in C^1([0,T] \times [M, \infty).
\end{split}
\]
Then the problem can be studied on the torus $[-K,K]|_{\{ -K, K \}} \times \left( [-1,1]_{\{-1, 1\}} \right)^2$.

Making obvious space shift transformation and taking the finite speed of propagation into account, the task reduces to finding 
a $C^1$ solution $\hat r$, $\hat U$ such that 
\begin{equation} \label{Z1}
\hat r > 0, \ \hat r(t, x_1) = r_R,\ \hat U(t, x_1) = U_R,\ x_1 < 0,\ 
\hat r(t,x_1) = r_L,\ \hat U(t, x_1) = U_L \ \mbox{for}\ x_1 > K.
\end{equation}
To this end we first choose the initial data $r_0$, $U_0$ with uniformly bounded $C^1(R)$ norm satisfying \eqref{Z1} (with $t=0$). 
By virtue of the known results on solvability of symmetric hyperbolic systems, see e.g. Li and Yu \cite{LiYu}, there exists 
a $C^1$ solution $r^\ep$, $U^\ep$ defined on a possibly short time interval $[0,\ep]$. Thus rescaling 
\[
\hat r(t, x_1) = r^\ep \left( t \frac{\ep}{T}, x_1 \frac{\ep}{T} \right), 
\hat U(t, x_1) = U^\ep \left( t \frac{\ep}{T}, x_1 \frac{\ep}{T} \right),\ t \in [0,T], \ x \in R
\]
yields the desired result. 

\bigskip

\centerline{\bf Acknowledgement}

\medskip

The paper was written during the stay of E.F. at the Centre for Applicable Mathematics, Tata Institute of Fundamental Research, 
Bangalore which support and hospitality are gladly acknowledged.

\def\cprime{$'$} \def\ocirc#1{\ifmmode\setbox0=\hbox{$#1$}\dimen0=\ht0
  \advance\dimen0 by1pt\rlap{\hbox to\wd0{\hss\raise\dimen0
  \hbox{\hskip.2em$\scriptscriptstyle\circ$}\hss}}#1\else {\accent"17 #1}\fi}

%\bibliography{citace}

\begin{thebibliography}{10}

\bibitem{BenSer}
S.~Benzoni-Gavage and D.~Serre.
\newblock {\em Multidimensional hyperbolic partial differential equations,
  {F}irst order systems and applications}.
\newblock Oxford Mathematical Monographs. The Clarendon Press Oxford University
  Press, Oxford, 2007.

\bibitem{BouJam2}
F.~Bouchut and F.~James.
\newblock One-dimensional transport equations with discontinuous coefficients.
\newblock {\em Nonlinear Anal.}, {\bf 32}(7):891--933, 1998.

\bibitem{BouJam1}
F.~Bouchut and F.~James.
\newblock Duality solutions for pressureless gases, monotone scalar
  conservation laws, and uniqueness.
\newblock {\em Comm. Partial Differential Equations}, {\bf
  24}(11-12):2173--2189, 1999.

\bibitem{BreFeiHof19}
D.~Breit, E.~Feireisl, and M.~Hofmanov{\' a}.
\newblock Solution semiflow to the isentropic {E}uler system.
\newblock {\em Arxive Preprint Series}, {\bf arXiv 1901.04798}, 2019.

\bibitem{CheChe}
G.-Q. Chen and J.~Chen.
\newblock Stability of rarefaction waves and vacuum states for the
  multidimensional {E}uler equations.
\newblock {\em J. Hyperbolic Differ. Equ.}, {\bf 4}(1):105--122, 2007.

\bibitem{CheGli}
G.~Q. Chen and J.~Glimm.
\newblock {K}olmogorov-type theory of compressible turbulence and inviscid
  limit of the {N}avier--{S}tokes equations in ${R}^3$.
\newblock {\em Arxive Preprint Series}, {\bf arXiv 1809.09490}, 2018.

\bibitem{Chiod}
E.~Chiodaroli.
\newblock A counterexample to well-posedness of entropy solutions to the
  compressible {E}uler system.
\newblock {\em J. Hyperbolic Differ. Equ.}, {\bf 11}(3):493--519, 2014.

\bibitem{ChiDelKre}
E.~Chiodaroli, C.~{D}e {L}ellis, and O.~Kreml.
\newblock Global ill-posedness of the isentropic system of gas dynamics.
\newblock {\em Comm. Pure Appl. Math.}, {\bf 68}(7):1157--1190, 2015.

\bibitem{ChiKre}
E.~Chiodaroli and O.~Kreml.
\newblock On the energy dissipation rate of solutions to the compressible
  isentropic {E}uler system.
\newblock {\em Arch. Ration. Mech. Anal.}, {\bf 214}(3):1019--1049, 2014.

\bibitem{ChKrMaSwI}
E.~Chiodaroli, O.~Kreml, V.~M{\' a}cha, and S.~Schwarzacher.
\newblock Non–uniqueness of admissible weak solutions to the compressible
  {E}uler equations with smooth initial data.
\newblock {\em Arxive Preprint Series}, {\bf arXiv 1812.09917v1}, 2019.

\bibitem{ConETit}
P.~Constantin, W.~E, and E.~S. Titi.
\newblock Onsager's conjecture on the energy conservation for solutions of
  {E}uler's equation.
\newblock {\em Comm. Math. Phys.}, {\bf 165}(1):207--209, 1994.

\bibitem{Daf4}
C.M. Dafermos.
\newblock The second law of thermodynamics and stability.
\newblock {\em Arch. Rational Mech. Anal.}, {\bf 70}:167--179, 1979.

\bibitem{DelSze3}
C.~De~Lellis and L.~Sz{\'e}kelyhidi, Jr.
\newblock On admissibility criteria for weak solutions of the {E}uler
  equations.
\newblock {\em Arch. Ration. Mech. Anal.}, {\bf 195}(1):225--260, 2010.

\bibitem{DiP3}
R.J. Di{P}erna.
\newblock Convergence of the viscosity method for isentropic gas dynamics.
\newblock {\em Comm. Math. Phys.}, {\bf 91}:1--30, 1983.

\bibitem{Fei2016}
E.~Feireisl.
\newblock Weak solutions to problems involving inviscid fluids.
\newblock In {\em Mathematical Fluid Dynamics, Present and Future}, volume 183
  of {\em Springer Proceedings in Mathematics and Statistics}, pages 377--399.
  Springer, New York, 2016.

\bibitem{FeKr2014}
E.~Feireisl and O.~Kreml.
\newblock Uniqueness of rarefaction waves in multidimensional compressible
  {E}uler system.
\newblock {\em J. Hyperbolic Differ. Equ.}, {\bf 12}(3):489--499, 2015.

\bibitem{FeGwGwWi}
Eduard Feireisl, Piotr Gwiazda, Agnieszka \'Swierczewska-Gwiazda, and Emil
  Wiedemann.
\newblock Regularity and energy conservation for the compressible {E}uler
  equations.
\newblock {\em Arch. Ration. Mech. Anal.}, {\bf 223}(3):1375--1395, 2017.

\bibitem{GSWW}
P.~Gwiazda, A.~\'Swierczewska-Gwiazda, and E.~Wiedemann.
\newblock Weak-strong uniqueness for measure-valued solutions of some
  compressible fluid models.
\newblock {\em Nonlinearity}, {\bf 28}(11):3873--3890, 2015.

\bibitem{LiYu}
T.~T. Li and W.~C. Yu.
\newblock {\em Boundary value problems for quasilinear hyperbolic systems}.
\newblock Duke University Mathematics Series, V. Duke University, Mathematics
  Department, Durham, NC, 1985.

\bibitem{LI}
P.-L. Lions.
\newblock {\em Mathematical topics in fluid dynamics, Vol.1, Incompressible
  models}.
\newblock Oxford Science Publication, Oxford, 1996.

\bibitem{LPS}
P.-L. Lions, B.~Perthame, and E.~Souganidis.
\newblock Existence and stability of entropy solutions for the hyperbolic
  systems of isentropic gas dynamics in {E}ulerian and {L}agrangian
  coordinates.
\newblock {\em Comm. Pure Appl. Math.}, {\bf 49}:599--638, 1996.

\bibitem{Majd}
A.~Majda.
\newblock {\em Compressible fluid flow and systems of conservation laws in
  several space variables}, volume~53 of {\em Applied Mathematical Sciences}.
\newblock Springer-Verlag, New York, 1984.

\bibitem{SMO}
J.~Smoller.
\newblock {\em Shock waves and reaction-diffusion equations}.
\newblock Springer-Verlag, New York, 1967.

\end{thebibliography}
%\bibliographystyle{plain}

\end{document}